\begin{document}

\newtheorem{theorem}{Theorem}    
\newtheorem{proposition}[theorem]{Proposition}
\newtheorem{conjecture}[theorem]{Conjecture}
\def\theconjecture{\unskip}
\newtheorem{corollary}[theorem]{Corollary}
\newtheorem{lemma}[theorem]{Lemma}
\newtheorem{sublemma}[theorem]{Sublemma}
\newtheorem{fact}[theorem]{Fact}
\newtheorem{observation}[theorem]{Observation}
\theoremstyle{definition}
\newtheorem{definition}[theorem]{Definition}
\newtheorem{notation}[theorem]{Notation}
\newtheorem{remark}[theorem]{Remark}
\newtheorem{question}[theorem]{Question}
\newtheorem{questions}[theorem]{Questions}
\newtheorem{example}[theorem]{Example}
\newtheorem{problem}[theorem]{Problem}
\newtheorem{exercise}[theorem]{Exercise}

\numberwithin{theorem}{section}
\numberwithin{equation}{section}

\def\reals{{\mathbb R}}
\def\torus{{\mathbb T}}
\def\integers{{\mathbb Z}}
\def\naturals{{\mathbb N}}
\def\complex{{\mathbb C}\/}
\def\distance{\operatorname{distance}\,}
\def\support{\operatorname{support}\,}
\def\dist{\operatorname{dist}\,}
\def\Span{\operatorname{span}\,}
\def\degree{\operatorname{degree}\,}
\def\dim{\operatorname{dim}\,}
\def\codim{\operatorname{codim}}
\def\trace{\operatorname{trace\,}}
\def\Span{\operatorname{span}\,}
\def\dimension{\operatorname{dimension}\,}
\def\codimension{\operatorname{codimension}\,}
\def\nullspace{\scriptk}
\def\kernel{\operatorname{Ker}}
\def\Re{\operatorname{Re\,} }
\def\Im{\operatorname{Im\,} }
\def\eps{\varepsilon}
\def\lt{L^2}
\def\diver{\operatorname{div}}
\def\curl{\operatorname{curl}}
\def\expect{\mathbb E}
\def\bull{$\bullet$\ }
\def\det{\operatorname{det}}
\def\Det{\operatorname{Det}}

\newcommand{\norm}[1]{ \|  #1 \|}
\newcommand{\Norm}[1]{ \Big\|  #1 \Big\| }
\newcommand{\set}[1]{ \left\{ #1 \right\} }
\def\one{{\mathbf 1}}
\newcommand{\modulo}[2]{[#1]_{#2}}
\newcommand{\abr}[1]{ \langle  #1 \rangle}

\renewcommand{\qed}{\hfill \mbox{\raggedright \rule{0.1in}{0.1in}}}

\def\bp{\mathbf p}
\def\bff{\mathbf f}
\def\bg{\mathbf g}

\def\b1{\mathbf 1}
\def\bi{\mathbf i}
\def\bj{\mathbf j}
\def\bk{\mathbf k}

\def\bu{\mathbf u}
\def\bv{\mathbf v}
\def\bw{\mathbf w}
\def\bx{\mathbf x}
\def\by{\mathbf y}

\def\scriptf{{\mathcal F}}
\def\scriptg{{\mathcal G}}
\def\scriptm{{\mathcal M}}
\def\scriptb{{\mathcal B}}
\def\scriptc{{\mathcal C}}
\def\scriptt{{\mathcal T}}
\def\scripti{{\mathcal I}}
\def\scripte{{\mathcal E}}
\def\scriptv{{\mathcal V}}
\def\scriptw{{\mathcal W}}
\def\scriptu{{\mathcal U}}
\def\scriptS{{\mathcal S}}
\def\scripta{{\mathcal A}}
\def\scriptr{{\mathcal R}}
\def\scripto{{\mathcal O}}
\def\scripth{{\mathcal H}}
\def\scriptd{{\mathcal D}}
\def\scriptl{{\mathcal L}}
\def\scriptn{{\mathcal N}}
\def\scriptp{{\mathcal P}}
\def\scriptk{{\mathcal K}}
\def\scriptP{{\mathcal P}}
\def\scriptj{{\mathcal J}}
\def\frakv{{\mathfrak V}}
\def\frakG{{\mathfrak G}}
\def\frakA{{\mathfrak A}}

\author{Marcos Charalambides}
\address{
        Marcos Charalambides\\
        Department of Mathematics\\
        University of California \\
        Berkeley, CA 94720-3840, USA}
\email{marcos@math.berkeley.edu}

\thanks{Research supported in part by NSF grant DMS-0901569.}

\date{\today}

\title
[Distinct distance subsets]
{A note on distinct distance subsets}

\maketitle
\begin{abstract}
It is shown that given a set of $N$ points in the plane or on the sphere, there is a subset of size $\gtrsim N^{1/3}/\log N$ with all pairwise distances between points distinct.
\end{abstract}

\section{Introduction}

Given a finite set $P$ of points in $\reals^2$, we may ask how large a subset $Q\subset P$ can be if all the ${|Q|}\choose{2}$ pairwise distances between elements of $Q$ are distinct. Define $\Delta(P)$ to be the maximal cardinality of such a subset $Q$.

A problem in the family of problems related to the Erd\H{o}s distinct distance problem \cite{erdos} first posed by Avis, Erd\H{o}s and Pach \cite{avep91}, asks how small the quantity $\Delta(P)$ can be as a function of the number of points of $P$. For a positive integer $N$, write $\delta(N)$ for the minimum of $\Delta(P)$ over all $N$-element sets $P\subset\reals^2$.

\begin{question}
What is the order of magnitude of $\delta(N)$?
\end{question}

See \cite{book} for a survey of this and several related problems. 

Since a $\sqrt{N}\times\sqrt{N}$ integer grid determines $\lesssim N/\sqrt{\log N}$ distinct distances, one obtains the following upper bound for $\delta(N)$.

\begin{proposition}\label{propy}
$\delta(N)\lesssim N^{1/2}(\log N)^{-1/4}$
\end{proposition}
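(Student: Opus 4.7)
The proposition just converts the statement about the grid (given as motivation) into the upper bound on $\delta(N)$, so the plan is simply to take $P$ to be a $\sqrt{N}\times\sqrt{N}$ integer grid and bound $\Delta(P)$ by elementary counting against the total number of distinct distances in $P$.

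First I would fix $P=\{0,1,\dots,\lfloor\sqrt{N}\rfloor-1\}^2\subset\integers^2$, so $|P|\le N$. The squared distance between any two points of $P$ is an integer of the form $a^2+b^2$ with $0\le a,b\le\sqrt{N}$, hence lies in $[0,2N]\cap\integers$. The Landau--Ramanujan theorem asserts that the number of integers in $[0,X]$ representable as a sum of two squares is $\asymp X/\sqrt{\log X}$; applied with $X\asymp N$, this gives
\[
\#\{\text{distinct distances between points of }P\}\;\lesssim\;\frac{N}{\sqrt{\log N}}.
\]
This is the ingredient already invoked in the paragraph preceding the proposition.

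Next, suppose $Q\subset P$ is a subset whose $\binom{|Q|}{2}$ pairwise distances are all distinct. Since every distance realized in $Q$ is in particular a distance realized in $P$, I get
\[
\binom{|Q|}{2}\;\le\;\#\{\text{distinct distances in }P\}\;\lesssim\;\frac{N}{\sqrt{\log N}},
\]
and solving for $|Q|$ yields $|Q|\lesssim N^{1/2}(\log N)^{-1/4}$. Taking the supremum over such $Q$ gives $\Delta(P)\lesssim N^{1/2}(\log N)^{-1/4}$, and hence $\delta(N)\le\Delta(P)$ satisfies the same bound.

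There is really no obstacle here; the only nontrivial input is the Landau--Ramanujan estimate, which is standard and is the source of the $(\log N)^{-1/4}$ improvement over the trivial bound $N^{1/2}$ one would get by bounding the number of distinct distances in an $N$-point set by $\binom{N}{2}$. I would simply cite Landau--Ramanujan (or its use by Erd\H{o}s in the distinct distances context) rather than reproduce the proof.
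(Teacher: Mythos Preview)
Your argument is correct and is exactly the one the paper has in mind: the paper's entire justification is the sentence ``Since a $\sqrt{N}\times\sqrt{N}$ integer grid determines $\lesssim N/\sqrt{\log N}$ distinct distances, one obtains the following upper bound for $\delta(N)$,'' and you have simply spelled out the implicit step $\binom{|Q|}{2}\le\#\{\text{distances in }P\}$. One tiny quibble: with $P=\{0,\dots,\lfloor\sqrt{N}\rfloor-1\}^2$ you have $|P|\le N$, whereas the inequality $\delta(N)\le\Delta(P)$ needs $|P|=N$; just use $\lceil\sqrt{N}\rceil$ instead and pass to an $N$-point subset (or note this is absorbed by the $\lesssim$).
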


\section{An improved lower bound}

Using the probabilistic method combined with a deletion argument, Lefmann and Thiele \cite{leffthiele} obtained the lower bound $\gtrsim N^{1/4}$. Dumitrescu \cite{dumi} improved this lower bound to $\gtrsim N^{0.288}$, by using an upper bound of $\lesssim N^{2.136}$ for the number of isosceles triangles determined by $N$ points in the plane due to Pach and Tardos \cite{pachtardos}. 

The purpose of this note is to observe that the aforementioned method of Lefmann and Thiele combined with the Guth-Katz bound \cite{guthkatz} on the number of quadruples $(p_1,p_2,q_1,q_2)\in P^4$ such that $\norm{p_1-p_2}=\norm{q_1-q_2}$ gives an improved lower bound for $\delta(N)$.

\begin{proposition}\label{proplow}
$\delta(N)\gtrsim N^{1/3}/\log N$
\end{proposition}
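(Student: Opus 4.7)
The plan is to execute the Lefmann--Thiele random sampling plus deletion scheme, using the Guth--Katz quadruple count as the key input. Let $P\subset\reals^{2}$ have $|P|=N$, and for each $t>0$ write $n_t$ for the number of unordered pairs in $P$ at distance $t$. The Guth--Katz theorem delivers the moment estimate
\[
\sum_{t} n_t^{\,2}\;\lesssim\; N^{3}\log N,
\]
equivalently, the number of ordered $4$-tuples $(p_1,p_2,q_1,q_2)\in P^{4}$ with $\|p_1-p_2\|=\|q_1-q_2\|$ is $O(N^{3}\log N)$. This is exactly the count of ``bad events'' the deletion argument must handle.

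First I would form a random subset $R\subseteq P$ by including each point independently with probability $p$, to be fixed at the end. Call an unordered pair of distinct two-element subsets $\{a,b\},\{c,d\}$ of $R$ a \emph{coincidence} if $\|a-b\|=\|c-d\|$. Any such coincidence is supported on three distinct points of $P$ (an isosceles triple) or on four, so if $Y$ denotes the number of coincidences in $R$ then
\[
\expect Y\;\lesssim\; p^{4}\,N^{3}\log N\;+\;p^{3}\,I(P),
\]
where $I(P)$ is the number of isosceles triples in $P$. Any crude polynomial bound $I(P)\lesssim N^{7/3}$ suffices, since for the values of $p$ of interest the first term will dominate the second.

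Next I would delete one representative point from each coincidence in $R$. What remains is a subset $Q\subseteq R$ with all pairwise distances distinct, satisfying $|Q|\geq|R|-Y$. Passing to expectations yields
\[
\expect|Q|\;\geq\; pN\;-\;C_1\,p^{4}N^{3}\log N\;-\;C_2\,p^{3}N^{7/3}.
\]
Choosing $p$ on the order of $N^{-2/3}(\log N)^{-1}$ makes $pN\asymp N^{1/3}/\log N$ while forcing each of the two correction terms to be strictly smaller; the first moment method then produces a concrete subset of the required size.

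The only serious input is the Guth--Katz inequality; the remainder is routine optimization. The main bookkeeping task is verifying that the $3$-point isosceles contribution is dominated in the chosen range of $p$, but any sub-cubic bound on $I(P)$ is more than enough.
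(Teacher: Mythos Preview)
Your proposal is correct and follows essentially the same argument as the paper: random sampling at rate $p$, splitting repeated-distance events into isosceles triples and $4$-point quadruples, applying the Szemer\'edi--Trotter bound $I(P)\lesssim N^{7/3}$ and the Guth--Katz bound $f(P)\lesssim N^{3}\log N$, deleting one point per bad event, and optimizing at $p\asymp N^{-2/3}(\log N)^{-1}$. One small caveat on your commentary: at this value of $p$ the isosceles term $p^{3}N^{7/3}$ and the quadruple term $p^{4}N^{3}\log N$ are of the \emph{same} order $N^{1/3}(\log N)^{-3}$ (neither dominates), and ``any sub-cubic bound'' on $I(P)$ is not in fact enough --- you genuinely need $I(P)=O(N^{7/3+o(1)})$ for the deletion to leave a positive fraction of $pN$; this does not affect the proof as you executed it.
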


\begin{proof}
Let $P\subset\reals^2$ have cardinality $N>2$. Let $q\in[0,1]$ be a probability which will be chosen below. Choose a random subset $Q$ of $P$; each element of $P$ is selected independently with probability $q$ and rejected with probability $(1-q)$. 

For a given subset $X$ of $\reals^2$, write $t(X)$ for the number of isosceles triangles in $X$, that is the number of ordered triples $(p,q_1,q_2)$ of \emph{distinct} elements of $X$ such that $\norm{p-q_1}=\norm{p-q_2}$. Write $f(X)$ for the number of quadruples $(p_1,p_2,q_1,q_2)$ of \emph{distinct} elements of $X$ such that $\norm{p_1-p_2}=\norm{q_1-q_2}$.

There are two ways that $Q$ can fail to have all pairwise distances distinct: precisely when either $t(Q)$ or $f(Q)$ are non-zero. For each isosceles triangle $(p,q_1,q_2)$ contributing to $t(Q)$, we delete one of the three points $p,q_1,q_2$ from $Q$ to obtain a subset $Q'\subset Q$ such that $t(Q')=0$ and $|Q'|\ge|Q|-t(Q)$. Similarly, we delete a point from each quadruple contributing to $f(Q)$ to obtain a subset $Q''\subset Q$ such that $t(Q'')=f(Q'')=0$ and \[|Q''|\ge |Q|-t(Q)-f(Q).\] Taking expectations we obtain \begin{equation}\label{main}\mathbb{E}(|Q''|)\ge \mathbb{E}(|Q|)-\mathbb{E}(t(Q))-\mathbb{E}(f(Q)).\end{equation}

By construction, $\mathbb{E}(|Q|)=qN$. When choosing $Q$ from $P$, each triple of distinct points of $P$ survives with probability $q^3$ and each quadruple survives with probability $q^4$. Thus, $\mathbb{E}(t(Q))\le q^3t(P)$ and $\mathbb{E}(f(Q))\le q^4f(P)$. 

Pach and Sharir \cite{pachshar} observed that the Szemer\'edi-Trotter theorem \cite{szemtrot} readily implies that $t(P)\lesssim N^{7/3}$. Pach and Tardos \cite{pachtardos} proved the stronger bound $t(P)\lesssim N^{2.136}$, but we will not need this improvement in the argument.

The Guth-Katz theorem \cite{guthkatz} implies that $f(P)\lesssim N^3\log N$. 

Plugging these bounds into (\ref{main}) gives \[\mathbb{E}(|Q''|)\ge qN - q^3aN^{7/3}-q^4bN^3\log N\] for some positive (universal) constants $a, b$. Setting $q=N^{-2/3}(\log N)^{-1}$ gives the bound \[\mathbb{E}(|Q''|)\ge \frac{N^{1/3}}{\log N}(1- a(\log N)^{-3} - b(\log N)^{-3}).\] 

By the pigeonhole principle, there thus exists a subset of $P$ of size $\gtrsim N^{1/3}/\log N$ with all pairwise distances distinct.
\end{proof}

\begin{remark}
By scaling the probability $q$ appropriately, the same proof shows that given any fixed $K>0$ there exists a positive integer $N_K$ such that \[\delta(N)\ge (K-o_K(1))\frac{N^{1/3}}{\log N}\] whenever $N>N_K$.
\end{remark}

The Guth-Katz bound is optimal up to constants \cite{guthkatz}, so it seems that the $1/3$ in the exponent is the best one can achieve with the method of Lefmann and Thiele (in the form used above). Nonetheless, the argument is rather wasteful and this author is inclined to conjecture that the order of magnitude of $\delta(N)$ should be closer to the upper bound of Proposition~\ref{propy}.

\begin{conjecture} Given $\epsilon>0$, there exists some constant $c_\epsilon>0$ such that \[\delta(N)\ge c_\epsilon N^{1/2-\epsilon}.\]
\end{conjecture}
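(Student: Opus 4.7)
The Conjecture is an open problem, and its truth would nearly close the gap with the upper bound of Proposition~\ref{propy}. Since the naive deletion framework of Proposition~\ref{proplow} is essentially limited to the exponent $1/3$ by the Guth-Katz bound $f(P)\lesssim N^3\log N$ (which is known to be sharp up to the logarithm for lattice-like configurations), any proof of the Conjecture must leave that framework behind. The plan is to attack the Conjecture via the hypergraph container method of Balogh-Morris-Samotij and Saxton-Thomason, which has proved powerful in analogous Ramsey-type problems, most notably in the study of Sidon sets inside arithmetic progressions.

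The setup would be to define a hypergraph $H$ on vertex set $P$ whose hyperedges are the minimal forbidden configurations: triples forming an isosceles triangle, and quadruples with two disjoint pairs at equal distance. A subset $Q\subset P$ has all pairwise distances distinct precisely when $Q$ is an independent set in $H$. The container method would then reduce the problem to producing a small family of subsets of $P$ (``containers'') that jointly cover every independent set, and inside any one of which the induced hypergraph already has few edges; the Lefmann-Thiele deletion argument of Proposition~\ref{proplow} is then to be applied inside a typical container. To invoke the container lemma one needs upper bounds on the number of hyperedges through any fixed $\ell$-element subset, and balancing these codegree bounds against the overall edge count should upgrade the Guth-Katz input into an improved exponent through iterated container passes.

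The principal obstacle, and the one I expect to drive the whole project, is precisely the codegree bounds. Fixing a single point $p_0\in P$, the number of isosceles triangles and of congruent-distance quadruples incident to $p_0$ can be controlled by pointed variants of Szemer\'edi-Trotter and of Guth-Katz respectively. Fixing two or more points, however, leads to substantially more delicate incidence questions: one would need strong uniform bounds on the number of pairs $(q_1,q_2)\in P^2$ with $\norm{q_1-q_2}$ equal to a prescribed value, uniformly across all values of the distance. Such bounds are not presently available in the literature in the strong form that the container method demands, and extremal examples like the square integer grid show that individual distances can be realised by $\gtrsim N\sqrt{\log N}$ pairs, so considerable care in the definition of $H$ and in the notion of a ``typical'' codegree will be required.

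Even granted adequate codegree estimates, reaching the full exponent $1/2-\epsilon$ looks to require either an iterated container reduction (at each stage passing to a container and rerunning the argument with the improved incidence geometry inherited by the smaller set) or a bootstrap of density-increment flavour tailored to the distance multiset. My guess is that a careful but direct container argument would yield only a modest intermediate bound such as $N^{1/3+c}$ for some small $c>0$, and that genuinely new ideas in the incidence geometry of planar point sets will be needed to push all the way to $N^{1/2-\epsilon}$. It is presumably for this reason that the author phrases his expectation as a conjecture rather than as a theorem.
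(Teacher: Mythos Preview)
The statement is a \emph{conjecture}, and the paper offers no proof of it whatsoever; it is stated immediately after Proposition~\ref{proplow} as an open problem expressing the author's belief that the true order of $\delta(N)$ is closer to the upper bound than to the $N^{1/3}$ lower bound. You correctly recognise this in your opening sentence, and everything that follows is explicitly a research programme rather than a proof. In that sense there is nothing to compare: the paper has no argument for the conjecture, and you do not claim to have one either.

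Your sketch of a container-based attack is a reasonable speculative outline, and your identification of the codegree bounds as the main obstruction is sensible. But to be clear about the status of what you have written: it is not a proof proposal in any meaningful sense, since none of the key steps (the required codegree estimates, the iterated container reduction, the bootstrap to exponent $1/2-\epsilon$) is carried out or even shown to be feasible. You yourself acknowledge that ``genuinely new ideas in the incidence geometry of planar point sets will be needed'' and that the author ``phrases his expectation as a conjecture rather than as a theorem.'' So the honest summary is: the paper contains no proof, you supply no proof, and the conjecture remains open.
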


\section{Distinct distance subsets on the sphere}

We may also ask the analogous question for finite subsets $P$ of the two-dimensional sphere $S$ and the analogous function $\delta_S(N)$. In this case, $N$ equally spaced points on a great circle determine $\lesssim N$ distinct distances, so one has the upper bound \[\delta_S(N)\lesssim \sqrt{N}.\] In this situation we can, in fact, obtain the same lower bound as for the plane.

\begin{proposition}\label{propsph}
$\delta_S(N)\gtrsim N^{1/3}/\log N.$
\end{proposition}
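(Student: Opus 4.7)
The plan is to run the same random-subset and deletion argument as in the proof of Proposition 2.2, now applied to $P \subset S$ with distances measured in the ambient Euclidean metric on $S \subset \mathbb{R}^3$. The inequality (\ref{main}) and the accompanying expectation bookkeeping $\mathbb{E}(|Q|) = qN$, $\mathbb{E}(t(Q)) \le q^3 t(P)$, $\mathbb{E}(f(Q)) \le q^4 f(P)$ are formal and do not use anything special about the plane; hence the choice $q = N^{-2/3}(\log N)^{-1}$ will yield $\mathbb{E}(|Q''|) \gtrsim N^{1/3}/\log N$ and the conclusion by pigeonhole, provided the sphere analogs of the two underlying combinatorial estimates $t(P) \lesssim N^{7/3}$ and $f(P) \lesssim N^3 \log N$ are established. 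The entire proof thus reduces to verifying these two bounds in the spherical setting.

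The first bound, $t(P) \lesssim N^{7/3}$, is a routine transfer of the Pach--Sharir argument. For each $p \in S$ the locus of points of $S$ at a given chord distance from $p$ is a circle on $S$ (the intersection of $S$ with a plane orthogonal to the radius through $p$), so $t(P)$ is bounded by the number of triples $(p,q_1,q_2) \in P^3$ with $q_1$ and $q_2$ lying on a common circle on $S$ centered at $p$. Stereographic projection sends circles on $S$ to circles or lines in $\mathbb{R}^2$ and preserves point--curve incidences, so the count reduces to the same planar circle-incidence problem that Pach and Sharir estimate via Szemer\'edi--Trotter, yielding $t(P) \lesssim N^{7/3}$ on the sphere.

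The second bound, $f(P) \lesssim N^3 \log N$, is the main obstacle and the real content of the argument. The approach I would take is to run the Elekes--Sharir reduction in the setting of the rotation group $SO(3)$, which is $3$-dimensional just as the rigid-motion group of $\mathbb{R}^2$ is. Each ordered pair of points of $S$ at a given chord distance selects a one-parameter family of rotations in $SO(3)$, and equal-distance quadruples translate into incidences between the $O(N^2)$ resulting curves. The polynomial partitioning machinery of Guth and Katz operates on a $3$-dimensional ambient variety and should transfer from $\mathbb{R}^3$ to $SO(3)$ with only cosmetic modifications, giving the matching $N^3 \log N$ estimate. Verifying that each step of the Guth--Katz flag-counting and polynomial-method argument survives the passage from the flat $\mathbb{R}^3$ to the curved $SO(3)$ is where the technical work of the proof lies; once this is in hand, the probabilistic calculation of Proposition 2.2 goes through verbatim.
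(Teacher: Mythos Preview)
Your proposal is correct and follows the paper's approach exactly: rerun the probabilistic deletion argument of Proposition~\ref{proplow}, reducing everything to the spherical bounds $t_S(P)\lesssim N^{7/3}$ and $f_S(P)\lesssim N^3\log N$. For the latter the paper simply cites the known extension of Guth--Katz to the sphere (via the $SO(3)$ Elekes--Sharir framework you describe; see \cite{taoblog}), so there is no difference in content there. For the isosceles bound, however, the paper's argument is tighter than your sketch. A generic stereographic projection sends the spherical circles centred at $p$ to planar circles that are \emph{not} Euclidean-centred at the image of $p$, so the problem does not literally reduce to Pach--Sharir's planar isosceles count; one must re-run their method rather than quote their result. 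The paper sidesteps this by fixing a \emph{base} vertex $q$, taking for each $p\ne q$ the spherical circle centred at $p$ through $q$, and projecting stereographically from $q$ itself: since every such circle passes through the pole, each projects to a \emph{line}, and the ordinary point--line Szemer\'edi--Trotter theorem then gives $\lesssim N^{4/3}$ incidences per choice of $q$, hence $t_S(P)\lesssim N^{7/3}$ directly.
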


The Szemer\`edi-Trotter theorem \cite{szemtrot} can be applied to this situation as well in a very similar way to how it was used in \cite{pachshar} to obtain the same upper bound on $t_S(P)$ (that is, the number of spherical isosceles triangles determined by $P$).

\begin{lemma}
$t_S(P)\lesssim N^{7/3}.$
\end{lemma}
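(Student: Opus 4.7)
The plan is to mirror Pach--Sharir's perpendicular-bisector argument, now on $S$. For distinct non-antipodal $q_1, q_2 \in S$, the set of spherically equidistant points is the intersection of $S$ with the plane through the origin orthogonal to $q_1 - q_2$, a great circle which I shall call the perpendicular bisector of the pair (the antipodal case is handled analogously, with the bisector being the equator polar to $q_1$). Double-counting by base then rewrites
\begin{equation*}
t_S(P) = 2\sum_\ell w_\ell \cdot |P\cap\ell|,
\end{equation*}
where $\ell$ ranges over the distinct great circles arising as such bisectors, $w_\ell$ is the number of pairs in $P$ with bisector $\ell$, and $\sum_\ell w_\ell \le \binom{N}{2}$.

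The heart of the argument is an incidence bound of Szemer\'edi--Trotter type for $N$ points of $S$ and $M$ great circles: $I \lesssim N^{2/3}M^{2/3}+N+M$. Since any two distinct great circles meet in exactly two (antipodal) points, they behave like pseudo-lines, and Sz\'ekely's crossing-number argument applies directly on $S$. Equivalently, identifying antipodal points passes to the projective plane, where great circles become projective lines and the classical Szemer\'edi--Trotter bound applies; stereographic projection, which sends great circles to circles or lines in $\reals^2$, gives yet a third route.

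To finish, perform a dyadic decomposition on the weights. For $k\ge 0$ let $L_k = \{\ell : 2^k\le w_\ell < 2^{k+1}\}$, so $|L_k|\lesssim N^2/2^k$ from $\sum_\ell w_\ell \le \binom{N}{2}$. The incidence bound applied to $L_k$ gives $\sum_{\ell\in L_k}|P\cap\ell|\lesssim N^2/2^{2k/3}+N+N^2/2^k$, whence $L_k$ contributes at most $\lesssim 2^{k/3}N^2+2^kN+N^2$ to $t_S(P)$. Only $k\le\log_2 N$ occur (since $w_\ell\le N/2$), and the first term dominates, peaking at $k\asymp\log N$ with value $\lesssim N^{7/3}$.

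The step deserving the most care is the incidence bound for great circles on $S$; once that is in hand through any of the three routes above, the remainder is a mechanical transcription of the planar argument, and no genuinely new ideas are needed.
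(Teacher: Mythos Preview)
Your argument is correct, but it differs from the paper's in a pleasant way. You follow Pach--Sharir's original perpendicular-bisector formulation: bisectors on $S$ are great circles, you establish a Szemer\'edi--Trotter-type incidence bound for points and great circles (via pseudo-lines, the projective plane, or stereographic projection), and then dyadically decompose over the multiplicities $w_\ell$ to sum. The paper instead fixes a \emph{base vertex} $q$ and, for each $p\in P\setminus\{q\}$, considers the spherical circle $\Gamma_p$ centred at $p$ through $q$; incidences of $P\setminus\{q\}$ with these circles count exactly the isosceles triangles having $q$ as a base vertex. The key observation is that all $\Gamma_p$ pass through $q$, so stereographic projection \emph{from $q$} sends them to genuine straight lines in the plane, and the classical Szemer\'edi--Trotter theorem gives $\lesssim N^{4/3}$ such triangles per base vertex, hence $\lesssim N^{7/3}$ in total. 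The paper's route is shorter --- no multiplicity bookkeeping, no dyadic sum, and no separate incidence lemma for great circles --- while your route has the advantage of proving a reusable point/great-circle incidence bound along the way and hewing more closely to the planar template. One small remark on your third route: stereographic projection sends great circles to a family of circles and lines in $\reals^2$ through which any two points lie on at most one member (since two non-antipodal points determine a unique great circle), so they form a pseudo-line system and the Szemer\'edi--Trotter bound applies; for arbitrary circles the bound would be weaker.
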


\begin{proof}
If $(p,q_1,q_2)$ is an isosceles triangle with $\norm{p-q_1}=\norm{p-q_2}$ then say that $q_1$ and $q_2$ are the \emph{base vertices} of the triangle. It suffices to show that the number of isosceles triangles with a fixed $q\in P$ as a base vertex is $\lesssim N^{4/3}$.

Let $q\in P$. For each point $p\in P\setminus\{q\}$, consider the spherical circle $\Gamma_p\subset S$ with center $p$ which passes through $q$. Any point on $\Gamma_p\cap P$ other than $q$ gives rise to an isosceles triangle with $q$ as a base vertex and, moreover, all isosceles triangles with $q$ as a base vertex arise in this way. Thus, to count the number of isosceles triangles with $q$ as a base vertex it is enough to count the number of incidences between the $N-1$ points $P\setminus\{q\}$ and the $N-1$ circles $\{\Gamma_p\,\,|\,\, p\in P\setminus\{q\}\}$. By embedding $S$ as the unit sphere in $\reals^3$ with $q$ as the north pole and applying a stereographic projection to the plane through the equator, all the circles become lines (since each one passes through $q$). By the Szemer\'edi-Trotter theorem, the number of incidences is bounded above by $\lesssim N^{4/3}$.
\end{proof}

It is known that the Guth-Katz bound also applies to the sphere (see, for example, the entry \cite{taoblog} on Terence Tao's internet blog); this gives $f_S(P)\lesssim N^3\log N$.

With these bounds on $t_S(P)$ and $f_S(P)$, the argument in the proof of Proposition~\ref{proplow} applies equally well to the situation of points on a sphere. This completes the proof of Proposition~\ref{propsph}.
\qed
\\

\paragraph{\bf{Acknowledgement}}

The author thanks M. Christ and A. Dumitrescu for suggestions on improving a preliminary version of this note.

\bibliographystyle{amsplain}{}
\bibliography{distbib}

\end{document}